\newtheorem{definition}{Definition}{\it}{}
{\it}{}
\newtheorem{corollary}{Corollary}{\it}{}
\newtheorem{proposition}{Proposition}{\it}{}
\newtheorem{lemma}{Lemma}{\it}{}
\newtheorem{theorem}{Theorem}{\it}{}
\newtheorem{remark}{Remark}{\it}{}
\newtheorem{assumption}{Assumption}{\it}{}
\newcommand{\bs}{\boldsymbol}
\newcommand{\mc}{\mathcal}
\newcommand{\bb}{\mathbb}
\newcommand{\R}{\bb R}
\DeclareMathAlphabet{\mathbbmsl}{U}{bbm}{m}{sl}
\newcommand{\red}{\textcolor{red}}
\newcommand{\argmin}{\operatorname{argmin}}
\newcommand{\proj}{\mathrm{proj}}
\newcommand{\diag}{\operatorname{diag}}
\newcommand{\blkdiag}{\operatorname{blkdiag}}
\newcommand{\col}{\operatorname{col}}
\newcommand{\0}{\mathbf{0}}
\newcommand{\1}{\mathbf{1}}
\newcommand{\Rmnum}[1]{\expandafter\@slowromancap\romannumeral #1@}
\newcommand{\eod}{\ensuremath{\hfill\Box}}
 \def\QEDhereeqn{\eqno\let\eqno\relax\let\leqno\relax\let\veqno\relax\hbox{\QED}}
\def\QEDopenhereeqn{\eqno\let\eqno\relax\let\leqno\relax\let\veqno\relax\hbox{\QEDopen}}
\begin{document}

\title{A two-stage approach for a mixed-integer economic dispatch game in integrated electrical and gas  distribution systems}

\author{Wicak Ananduta and Sergio Grammatico
\thanks{W. Ananduta and S. Grammatico are with the Delft Center of Systems and Control (DCSC), TU Delft, the Netherlands. E-mail addresses: \texttt{\{w.ananduta, s.grammatico\}@tudelft.nl}. }
\thanks{This work was partially supported by the ERC under research project COSMOS (802348). }
}

% The paper headers
%\markboth{Journal of \LaTeX\ Class Files,~Vol.~14, No.~8, August~2021}%
%{Shell \MakeLowercase{\textit{et al.}}: A Sample Article Using IEEEtran.cls for IEEE Journals}

%\IEEEpubid{0000--0000/00\$00.00~\copyright~2021 IEEE}
% Remember, if you use this you must call \IEEEpubidadjcol in the second
% column for its text to clear the IEEEpubid mark.

\maketitle

\begin{abstract}
We formulate for the first time the economic dispatch problem in an integrated electrical and gas distribution system as a game equilibrium problem between distributed prosumers. Specifically, by approximating the non-linear gas-flow equations either with a mixed-integer second order cone or a piece-wise affine model and by assuming that electricity and gas prices depend linearly on the total consumption we obtain a potential mixed-integer game. To compute an approximate generalized Nash equilibrium, we propose an iterative two-stage method that exploits a problem convexification and the gas flow models. We quantify the quality of the computed solution and perform a numerical study to evaluate the performance of our method.
\end{abstract}

\begin{IEEEkeywords}
 economic dispatch, integrated electrical and gas systems, generalized mixed-integer games
\end{IEEEkeywords}

%%%%%%%%%%%%%%%%%%%%%%%%%%%%%%%%%%%%%%%%%%%%%%%%%%%%%%%%%%%%%%%%%%%%%%%%%%
\section{Introduction}

\IEEEPARstart{O}{ne} of the key features of future energy systems is the decentralization of power generation \cite{pepermans05distributed,mehigan18review}, where small-scale distributed generators (DGs) will have a major contribution in meeting energy demands. Furthermore, the intermittency of renewable power generation might necessitate the co-existence of non-renewable yet {controllable} DGs to provide enough supply and offer flexibility \cite[Sec. 3]{pepermans05distributed}. In this context, gas-fired generators, such as  combined heat and power (CHP) \cite{houwing10,zhang20}, can play a prominent role due to its efficiency and infrastructure availability. Consequently,  electrical and gas systems are {expected to be} more intertwined in the future {and} {in fact, this  integration has {received research} attention in the control {systems} community   \cite{zlotnik16,singh20,sundar21,hari22,roald20}.}

Several works, e.g. \cite{roald20,urbina07,correa14b,zhang16,wu20decentralized,he17,wen17,he18,liu19,li18privacy,qi19}, particularly  study {the} tertiary control problem {for} an integrated electrical and gas system (IEGS), {i.e., the problem of} {computing} optimal operating points {for the} generators. In these papers, the economic dispatch problem  is posed as an optimization program where the main objective is to minimize the operational cost  of the whole network, which includes electrical and gas production costs, {subject to physical {dynamics} and operational constraints.} 
Differently from  \cite{urbina07,correa14b,zhang16,wu20decentralized,he17,wen17,he18,liu19,li18privacy,qi19,roald20}, where a common objective is considered, when DGs are owned by different independent entities (prosumers), the operations of these DGs depend on {several individual} objectives. {In the latter case,} to compute optimal operating points of its DG, each prosumer must solve its own economic dispatch problem. However, these prosumers are coupled with each other as they share a common power (and possibly gas) distribution network. Their objective functions can also depend on the decisions of other prosumers, such as via an {energy} price function \cite{atzeni13,belgioioso21operationally}. {Therefore,} the economic dispatch problems of prosumers in an IEGS {is naturally} a generalized game \cite{facchinei10}. When each prosumer aims at finding a decision that optimizes its objective given the decisions of others, we obtain a generalized Nash equilibrium (GNE) problem, i.e., the problem of finding a GNE, a point where each player has no incentive to unilaterally deviate.

%\com{Transition to game setup is too fast. Better motivation is also required} Meanwhile, as there might potentially be multiple owners of the DGs in the network and each DG owner may be selfish, optimizing the social welfare, as discussed in the aforementioned literature, might not be a common objective. This fact renders the economic dispatch problem into a generalized game, i.e., a set of interdependent optimization problems, each of which has coupling cost function and constraints \cite{facchinei10}. A typical solution to a generalized game is a generalized Nash equilibrium (GNE) where each player has no incentive to unilaterally deviate. When the game is jointly convex, i.e., the cost function of each player is convex with respect to the player's strategy and the global feasible set is convex, efficient algorithmic solutions to solve a generalized Nash equilibrium problem are readily available in the literature, e.g., \cite{yi19,franci20,belgioioso20semi,bianchi22fast}. 

For a jointly convex game, i.e., {when} the cost function of each player is convex with respect to the player's strategy and the global feasible set is convex, efficient algorithmic solutions to solve a GNE problem are available in the literature, e.g., \cite{yi19,franci20,belgioioso20semi,bianchi22fast}.  However, the economic dispatch problem in an IEGS system is typically formulated as a mixed-integer  optimization program due to the approximation methods commonly used for non-linear gas-flow equations. For instance, \cite{he17,he18,wen17,liu19} consider mixed-integer second-order cone (MISOC) gas-flow models whereas  \cite{urbina07,correa14b,zhang16,wu20decentralized} use mixed-integer linear ones.  {On account of mixed-integer nature of the problem,} {the GNE seeking methods in \cite{yi19,franci20,belgioioso20semi,bianchi22fast} are  not applicable and in fact,} there are only a few works that propose GNE-seeking methods for mixed-integer generalized games, e.g., \cite{sagratella17,sagratella19,cenedese19,fabiani20}.

%\com{highlight novelty: game formulation} 
 In this paper, we formulate an economic dispatch problem in an integrated electrical and gas distribution system (IEGDS) as a mixed-integer generalized game (Section \ref{sec:IEGDSmodel}). Specifically, we consider prosumers, i.e., the entities that own active components, namely DGs and storage units, as the players of the game. Aside from consuming {electricity} and gas, each prosumer can produce and/or store {electrical} power as well as buy power and gas from the main grid, {where the prices depend linearly on the aggregate consumption.}  Furthermore, {our formulation can incorporate} {a} piece-wise affine (PWA) gas flow approximation, yielding a set of mixed-integer linear constraints, or {a classic} MISOC relaxation. {The} game-theoretic formulation {is the main conceptual novelty} of this {paper} compared to the existing literature in IEGDSs. {To our knowledge, there are only a few works, e.g. \cite{wang18,chen20}, that discuss  the dispatch of IEGDS systems as a (jointly convex) generalized game, {but} under a {substantially} different setup, i.e., a two-player game between the electrical and gas network operators, {subject to} a perfect-pricing assumption. }

Then, we propose a novel two-stage approach to compute a solution of the economic dispatch game, namely a(n) (approximate) mixed-integer GNE (MI-GNE) (Section \ref{sec:prop_approach}). In the first stage, we relax the problem into a jointly convex game and compute a GNE of the convexified game. {Next,} in the second stage we recover a mixed-integer solution, which has minimum gas-flow violation, by exploiting the {gas flow  models} and {by} solving a linear program. Furthermore, we can refine the computed solution by iterating these steps. {In these iterations, we introduce an auxiliary penalty function on the gas-flows to the convexified game and adjust its penalty weight. {Consequently,} we can provide a condition when our iterative algorithm  obtains an (approximate) MI-GNE {and measure the solution quality} (Theorem \ref{thm:character_sol}).} Differently from other existing MI-GNE seeking methods \cite{sagratella17,sagratella19,cenedese19,fabiani20}, our method allows for a parallel implementation {and does not solve a mixed-integer {optimization.}} {We also remark that existing distributed parallel mixed-integer optimization algorithms, e.g. \cite{falsone18,camisa22}, only deal with linear objective functions; therefore, they are unsuitable for our case.}  In Section \ref{sec:sim_res}, we show the performance of our algorithm via numerical simulations {of a benchmark} 33-bus-20-node distribution network. {We note that, in the preliminary work \cite{ananduta22}, we only consider the PWA gas flow model and implement the two stage approach without the refining iterations to compute an approximate solution to the economic dispatch problem of multi-area IEGSs \cite{wu20decentralized,he18,qi19}, which is an optimization problem with a common and separable cost function, instead of a {noncooperative} game.}%In {the} preliminary work \cite{ananduta22}, we show that {our} two-stage method without the refining iterations obtains solutions with better gas-flow approximation than those of {the} state-of-the-art method in \cite{he18} for the economic dispatch problem of multi-area IEGSs \cite{wu20decentralized,he18,qi19}, which is an optimization problem with a common and separable cost function, instead of a {noncooperative} game.
 
%\vspace{-5pt}
\subsection*{Notation} We denote by $\R$ and $\bb N$ the set of real numbers and that of natural numbers, respectively. We denote by  $\bs{0}$ ($\bs{1}$) a matrix/vector with all elements equal to $0$ ($1$).
%;
%to improve clarity, we may add the dimension of these matrices/vectors as subscript.
 The Kronecker product between the matrices $A$ and $B$ is denoted by $A \otimes B$. For a matrix $A \in \R^{n \times m}$, its transpose is $A^\top$. %, $[A]_{i,j}$ represents the element on the row $i$ and column $j$. %
For symmetric $ A \in \R^{n \times n}$, $A \succ 0$ ($\succcurlyeq 0$) stands for positive definite (semidefinite) matrix.
%Given $N$ vectors $x_1 \in \bb R^{n_1}, x_2 \in \bb R^{n_2},\ldots, x_N \in \R^{n_N}$, $x := \col\left(x_1,\ldots,x_N\right) = [ x_1^\top, \ldots , x_N^\top ]^\top$.
%{For any $x \in \bb R^n$, $\|x\|_A^2 = x^\top A x,$ with square symmetric matrix $A \succ 0$}. 
{The operator $\col(\cdot)$ stacks its arguments into a column vector whereas  $\diag(\cdot)$ ($\blkdiag(\cdot)$) creates a (block) diagonal matrix with its arguments as the (block) diagonal elements.} The sign operator is denoted by $\operatorname{sgn}(\cdot)$, i.e., 

\vspace{-5pt}
{ $$ \operatorname{sgn}(a) = \begin{cases}
	1 \quad &\text{if } a > 0, \\
	0 \quad &\text{if } a = 0, \\
	-1 \quad &\text{if } a < 0.
\end{cases}$$}
%\com{monotonicity, convexity, potential game, exact potential function}
\section{Economic dispatch game}

\label{sec:IEGDSmodel}
In this section, we formulate  {the} economic dispatch game of a set of $N$ prosumers (agents), denoted by $\mc I := \{1,2,\dots,N\}$, in an IEGDS. Each  prosumer seeks an economically efficient decision {(optimal references/set points)} to meet its electrical and gas demands over a certain time horizon, denoted by $H$; let us denote the set of time indices by $\mc H:=\{1,\dots, H\}$. First, we provide the model of the system, which consists of two parts, the electrical and gas networks.

\subsection{Electrical network}
 {To} meet the electrical demands, denoted by $d_i^{\mathrm e} \in \bb R^H_{\geq 0}$ for all $i \in \mc I$, {the} prosumers may have a dispatchable DG, which can be either gas-fueled or non-gas-fueled.  We denote the set of agents that have a \underline{g}as-fueled \underline{u}nit by $\mc I^{\mathrm{gu}} \subset \mc I$ whereas those that have a \underline{n}on-\underline{g}as-fueled \underline{u}nit by $\mc I^{\mathrm{ngu}} \subset \mc I$. We note that $\mc I^{\mathrm{dg}}:=\mc I^{\mathrm{gu}} \cup \mc I^{\mathrm{ngu}}  \subseteq \mc I$. For each $i \in \mc I^{\mathrm{dg}}$, let us denote the power produced by its  {generation} unit by $p_i^{\mathrm{dg}} \in \bb R_{\geq 0}^H$, constrained by
\begin{equation}
	\begin{aligned}
		\1_H\,\underline{p}_{i}^{\mathrm{dg}} \leq p_{i}^{\mathrm{dg}} &\leq \1_H \, \overline{p}_{i}^{\mathrm{dg}}, 
	\end{aligned}
	\label{eq:p_pu_bound}	
\end{equation}
where $\underline{p}_{i}^{\mathrm{dg}} < \overline{p}_{i}^{\mathrm{dg}}$ denote the minimum and maximum power  {production.} Specifically for the prosumers with non-gas-fueled  {DGs,} we consider a quadratic cost of producing power, i.e., 
\begin{equation}
	f_{i}^{\mathrm{ngu}}(p_i^{\mathrm{dg}}) = 
	\begin{cases}
		q_i^{\mathrm{ngu}}\|p_i^{\mathrm{dg}}\|^2+ l_i^{\mathrm{ngu}} \1^\top p_i^{\mathrm{dg}}, \  &\text{if } i \in \mc I^{\mathrm{ngu}}, \\
		0, \quad & \text{otherwise,}
	\end{cases}
	\label{eq:f_ngu}
\end{equation}
where $q_i^{\mathrm{ngu}}>0$ and $l_i^{\mathrm{ngu}}$ are constants. On the other hand, for the prosumers with gas-fueled  {DGs,} we assume a linear relationship between the consumed gas, $d_i^{\mathrm{gu}}$, and the  {produced power,} as in  {\cite[Eq. (24)]{liu19},} i.e., 
\begin{equation}
	d_i^{\mathrm{gu}} =
	\begin{cases}
		\eta_i^{\mathrm{gu}} p_i^{\mathrm{dg}}, \quad &\text{if } i \in \mc I^{\mathrm{gu}}, \\
		0, \quad &\text{otherwise},
	\end{cases}
	\label{eq:conv_dgu_ppu}	
\end{equation}
where $\eta_i^{\mathrm{gu}}>0$ denotes the conversion factor. 

%\paragraph*{electrical storage units} 
Each prosumer might also own a  {controllable} storage unit, whose cost function, which corresponds to reducing its degradation, is denoted by $f_{i}^{\mathrm{st}}:\mathbb{R}^{2H} \to \mathbb{R}$:
\begin{equation}
	f_{i}^{\mathrm{st}}(p_{i}^{\mathrm{ch}},p_{i}^{\mathrm{dh}})= %\|p_{i}^{\mathrm{ch}}\|_{Q_i^{\mathrm{st}}}^2 + \|p_{i}^{\mathrm{dh}}\|_{Q_i^{\mathrm{st}}}^2, \label{eq:f_st}
	(p_{i}^{\mathrm{ch}})^\top Q_i^{\mathrm{st}}p_{i}^{\mathrm{ch}} + (p_{i}^{\mathrm{dh}})^\top Q_i^{\mathrm{st}}p_{i}^{\mathrm{dh}} , \label{eq:f_st}
\end{equation}
where $Q_i^{\mathrm{st}}\succcurlyeq0$. The variables $p_{i}^{\mathrm{ch}} = \col((p_{i,h}^{\mathrm{ch}})_{h\in \mc H})$ and $p_{i}^{\mathrm{dh}} = \col((p_{i,h}^{\mathrm{dh}})_{h\in \mc H})$ denote the charging and discharging powers, which are constrained by the battery dynamics {and operational limits}  {\cite[Eqs. (1)--(3)]{zhang20}:}
\begin{equation}
	\begin{aligned}
		\left.
		\begin{array}{c}
			x_{i,h+1} =\eta_i^{\mathrm{st}} x_{i,h} +\frac{T_{\mathrm{s}}}{e^{\mathrm{cap}}_{i}}(\eta_i^{\mathrm{ch}} p_{i,h}^{\mathrm{ch}} -(\frac{1}{\eta_i^{\mathrm{dh}}})p_{i,h}^{\mathrm{dh}}), \\[.2em]
			\underline{x}_i \leq x_{i,h+1} \leq \overline{x}_i, \\[.2em]
		\end{array}
		\hspace{-5pt} \right\} & \begin{matrix}
			\forall i \in \mc I^{\mathrm{st}}, \\ \forall h \in \mc H,
		\end{matrix} \\
	{p}_{i}^{\mathrm{ch}} \in [0,\overline{{p}}_{i}^{\mathrm{ch}}], \ \ {p}_{i}^{\mathrm{dh}} \in [0,\overline{{p}}_{i}^{\mathrm{dh}}], \qquad \qquad & \forall i \in \mc I^{\mathrm{st}}, \\
		p_{i}^{\mathrm{ch}}= 0, \ \ p_{i}^{\mathrm{dh}}= 0,\qquad \qquad & \forall i \in {\mc I \backslash \mc I^{\mathrm{st}},}
	\end{aligned}
	\label{eq:storage}
\end{equation}
where $x_{i,h} $ denotes the state of charge (SoC) of the storage unit at time $h \in \mc H$, $\eta_i^{\mathrm{st}}, \eta_i^{\mathrm{ch}}, \eta_i^{\mathrm{dh}} \in (0,1]$ denote  the leakage coefficient of the storage, charging, and discharging efficiencies, respectively, while %$b_i~=~-\frac{T_{\mathrm{s}}}{e^{\mathrm{cap}}_{i}}$, with 
$T_{\mathrm{s}}$ and $e^{\mathrm{cap}}_{i}$ denote the sampling time and the maximum capacity of the storage, respectively. Moreover, $\underline{x}_i,\overline{x}_i  \in [0,1]$  denote the minimum and the maximum SoC of the storage unit of prosumer $i$, respectively, whereas $\overline p^{\mathrm{ch}}_i \geq 0$ and $\overline p^{\mathrm{dh}}_i \geq 0$ denote the maximum charging and discharging power of the storage unit. Finally, we denote by  $\mathcal{I}^{\mathrm{st}}\subseteq\mathcal{I}$ 	the set of prosumers that own a storage unit.

These prosumers may also buy electrical power from the main grid, and we denote this decision by $p_i^{\mathrm{eg}} := \col((p_{i,h}^{\mathrm{eg}})_{h \in \mc H})\in \bb R^H_{\geq 0}$, where $p_{i,h}^{\mathrm{eg}}$ denotes the decision at time step $h$. %As in \cite{atzeni2012demand}, 
We {consider a typical assumption in demand-side management,} {namely that} the electricity price depends on the total consumption of the network of prosumers,  {which} is usually defined as a quadratic function  {\cite[Eq. (12)]{atzeni13},} i.e., 
\begin{equation*}
	c_h^{\mathrm{e}}(\sigma^{\mathrm{e}}_h)= q_h^{\mathrm{e}}(\sigma^{\mathrm{e}}_h)^2 + l_h^{\mathrm{e}}\sigma^{\mathrm{e}}_h,
\end{equation*}
where $\sigma^{\mathrm{e}}_h$ denotes the {aggregate load on the main electrical grid}, i.e., 
%\begin{equation*}% \label{eq:Aggr}
$	\sigma^{\mathrm{e}}_h = \sum_{i\in\mathcal{I}} p_{i,h}^{\mathrm{eg}},$
%\end{equation*}
and $q_h^{\mathrm{e}}, l_h^{\mathrm{e}} \geq 0$ are constants.
Therefore, by denoting $\sigma^{\mathrm{e}} = \col(\{\sigma^{\mathrm{e}}_h\})$, the objective function of agent $i$ associated to the trading with the main grid, denoted by $f_{i}^{\mathrm{eg}}$, is defined as 
\begin{equation}
	\begin{aligned} \textstyle
		f_{i}^{\mathrm{e}}\left( p_{i}^{\mathrm{eg}},\sigma^{\mathrm{e}} \right) &= \sum_{h \in \mc H} c_h^{\mathrm{e}}(\sigma^{\mathrm{e}}_h) \frac{p_{i,h}^{\mathrm{eg}}}{\sum_{j\in\mathcal{I}}p_{j,h}^{\mathrm{eg}}}\\
		&= \sum_{h \in \mc H} q_h^{\mathrm{e}}\cdot\Big(\sum_{j\in\mathcal{I}}p_{j,h}^{\mathrm{eg}}\Big)\cdot p_{i,h}^{\mathrm{eg}} + l_h^{\mathrm{e}}. \label{eq:f_mg}
	\end{aligned}
\end{equation}
Moreover, we impose that the aggregate power traded with the main grid is bounded as follows:
\begin{align}
	\1_H \,\underline{\sigma}^{\mathrm{e}} \leq \sum_{i\in\mathcal{I}} p_{i}^{\mathrm{eg}}\leq \1_H \, \overline{\sigma}^{\mathrm{e}}, \label{eq:p_mg_bound}
\end{align}
where $\overline{\sigma}^{\mathrm{e}}>\underline{\sigma}^{\mathrm{e}} \geq 0$ denote the upper and lower bounds. Note that the lower bound might be required to be positive in order to ensure a continuous operation of the main generators that supply the main grid.

%\com{I think we should include the power flow equations instead of having peer-to-peer trading. Adding power flow equations means we will need another layer/graph. However, having three different graphs seem too complicated. Therefore, I prefer to remove the trading feature and graph, as our emphasis in this work is in the interconnection between gas and electrical network.}

Next, we describe the physical constraints of the electrical network. {For ease of exposition,} we assume that each agent is associated with a bus (node) in an electrical distribution network, which can be represented by an  undirected graph $\mc G^{\mathrm{e}}:=(\mc I, \mc E^{\mathrm e})$, where $\mc E^{\mathrm e}$ denotes the set of power lines. Therefore, we denote by $\mc N_i^{\mathrm e}$ the set of neighbor nodes of node $i$, i.e., $\mc N_i^{\mathrm e} :=\{j \mid (i,j) \in \mc E^{\mathrm e}\}$. Let us then denote by $\theta_i \in \bb R^H$ and $v_i\in \bb R^H$ the voltage angle and magnitude of bus $i \in \mc I$ while by $p_{(i,j)}^{\ell},q_{(i,j)}^{\ell} \in \bb R^H$ the active and reactive powers of line $(i,j) \in \mc E^{\mathrm e}$. 

{The voltage phase angle ($\theta_i$) and magnitude ($v_i$) for each node $i \in \mc I$ are bounded by}
\begin{subequations}
	\label{eq:line}
	\begin{align}
	%	(p_{(i,j),h}^{\ell})^2 + (q_{(i,j),h}^{\ell})^2 &\leq \overline{s}_{(i,j)}^2, \forall j\in\mc N_i^{\mathrm e}, \ \forall h\in \mc H, \label{eq:line_cap}\\
		\underline{\theta}_i\1 \leq \theta_i &\leq \overline{\theta}_i\1,  \label{eq:theta_b}\\
		\underline{v}_i\1 \leq v_i &\leq \overline v_i\1, \label{eq:v_b}
	\end{align}
\label{eq:rel_cons}%
\end{subequations}
where %\eqref{eq:line_cap} represents the line capacity constraint at each line, with maximum capacity of line $(i,j) \in \mc E^{\mathrm e}$ denoted by $\overline{s}_{(i,j)}$, and
 %\eqref{eq:theta_b}--\eqref{eq:v_b} represent the bounds of the voltage phase angles and magnitudes, respectively, with 
 $\underline{\theta}_i \leq \overline{\theta}_i$ denote the minimum and maximum phase angles and $\underline{v}_i\leq \overline v_i$ denote the minimum and maximum voltages. %Note that, when linearizing the power flow equations, we take one of the busses as reference bus. 
 Without loss of generality, we suppose the reference is bus $1$ and assume $\underline{\theta}_1 = \overline{\theta}_1=0$. 

The power balance equation, which ensures equal production and consumption,  at each bus can be written as:
\begin{subequations}
\begin{align}
	d_i^{\mathrm{e}} &= p_i^{\mathrm{dg}} + p_i^{\mathrm{eg}} + p_i^{\mathrm{dh}} - p_i^{\mathrm{ch}} ,  & \forall i \in \mc I, \label{eq:pow_bal} \\
	p_i^{\mathrm{eg}} &= p_i^{\mathrm{et}}-\sum_{j \in \mc N_i} p_{(i,j)}^{\ell},  &\forall i \in \mc I, \label{eq:pow_bal2}
\end{align}
\label{eq:pow_bals}%
\end{subequations}
where $p_i^{\mathrm{et}}$ denotes the injected power from the electrical transmission grid if node $i$ is connected to it and $p_{(i,j)}^{\ell}$, for each $j \in \mc N_i$, denote the real power line between node $i$ and its neighbor $j$. 
Furthermore, for simplicity, we use a linear approximation of the real power-flow equations \cite[Eq. (2)]{yang19}, 
%\begin{subequations}
	%\small
%	\label{eq:pf_pq}
	\begin{align}
		&p_{(i,j)}^{\ell} = B_{(i,j)}\left({\theta_i - \theta_j} \right) - G_{(i,j)}\left(v_i - v_j \right), \, \, \forall j \in \mc N_i^{\mathrm e}, \label{eq:pf_all}%\\
		%&q_{(i,j)}^{\ell} = G_{(i,j)}\left({\theta_i - \theta_j} \right) + B_{(i,j)}\left(v_i - v_j \right), \, \,  \forall j \in \mc N_i^{\mathrm e}, \label{eq:pf_q}
	\end{align}
%	\label{eq:pf_all}%
%\end{subequations}
for each $i \in \mc I$, where $B_{(i,j)}$ and  $G_{(i,j)}$ denote the absolute values of the susceptance and conductance of line $(i,j)$, respectively. %Note that by \eqref{eq:pf_all}, for each pair $(i,j) \in \mc E^{\mathrm e}$, it holds that $p_{(i,j)}^{\ell} = {-p}_{(j,i)}^{\ell}$. % and $q_{(i,j)}^{\ell} = {-q}_{(j,i)}^{\ell}$. 
Finally, let us now collect {all the decision variables associated with the electrical network by} $x_i := \col(p_i^{\mathrm{dg}},p_i^{\mathrm{ch}}, p_i^{\mathrm{dh}}, p_i^{\mathrm{eg}},d_i^{\mathrm{gu}}, \theta_i, v_i, p_i^{\mathrm{et}},  \{p_{(i,j)}^{\mathrm{\ell}}\}_{j \in \mc N_i^{{\mathrm e}}}) \in \bb R^{n_{x_i}}$, with $n_{x_i}=H(8+\mc N_i^{\mathrm e})$,  for each $i \in \mc N$.

\subsection{Gas network}
%\paragraph*{Define graph of gas network}	
 Beside consuming $d_i^{\mathrm{gu}}\in \bb R_{\geq 0}^H$ for its gas-fired generator, we suppose that prosumer $i$ has  an {undispatchable} gas demand, denoted by $d_i^{\mathrm{g}}\in \bb R_{\geq 0}^H$.  The total gas demand of each prosumer is satisfied by buying gas from a source, which can either be a gas transmission network or a gas well. These {prosumers} are connected in a gas distribution network, represented by an undirected graph denoted by $\mc G^{\mathrm{g}} = (\mc I, \mc E^{\mathrm{g}})$, where  we assume that  each agent is a different node in $\mc G^{\mathrm{g}}$ and denote by $\mc E^{\mathrm{g}}$  the set of pipelines (links), where both $(i,j),(j,i) \in \mc E^{\mathrm g}$ represent the pipeline between nodes $i$ and $j$.  If node $j$ is connected to node $i$, then node $j$ belongs to the set of neighbors of node $i$ in the gas network, denoted by $\mc N_i^{\mathrm{g}} := \{ j \in \mc N^{\mathrm{g}} \mid (i,j) \in  \mc E^{\mathrm{g}}\}$. Therefore, the gas-balance equation of node $i \in {\mc I}$ can be written as: %\red{[add new var for importing gas from transmission grid. Reformulate coupling constraints etc.]}
\begin{align}
	g_i^{\mathrm{s}} - d_i^{\mathrm{g}} - d_i^{\mathrm{gu}} &= \sum_{j \in \mc N_i^{\mathrm{g}}} \phi_{(i,j)},  \label{eq:gbal}%\\
	%	\sum_{j \in \mc N_i^{\mathrm{g}}} \phi_{(i,j)} &= d^{\mathrm{tg}}, \quad i=N+1,\label{eq:gbal1}
\end{align}
where $g_i^{\mathrm{s}}\in \bb R_{\geq 0}^H$ denotes the imported gas from a source, if agent $i$ is connected to it. We denote the set of nodes connected to the gas source by $\mc I^{\mathrm{gs}}$. {Moreover,}	$\phi_{(i,j)}:=\col((\phi_{(i,j),h})_{h\in\mc H}) \in \bb R^H$ denotes the flow between nodes $i$ and $j$ from the perspective of agent $i$, i.e., $\phi_{(i,j),h} >0$ implies the gas flows from node $i$ to node $j$.  %We note that by \eqref{eq:gbal}--\eqref{eq:gbal1}, it holds that
%	\begin{equation}
	%	\sum_{i\in\mathcal{N}^{\mathrm g}} d_i^{\mathrm{g}} + d_i^{\mathrm{gu}} = \sum_{i\in\mathcal{N}^{\mathrm g}} \sum_{j \in \mc N_i^{\mathrm{g}}} \phi_{(i,j)} = d^{\mathrm{tg}}.
	%	\label{eq:Aggr_gas}
	%	\end{equation}
We formulate the cost of buying gas similarly to that of importing power from the main grid as these prosumers must pay the gas with a common price that may vary. Specifically, with the per-unit cost that depends on the total gas consumption, we have the following cost functions  {($\forall i \in \mc I$):}
\begin{equation}
	f_i^{\mathrm g}(d_i^{\mathrm{gu}},\sigma^{\mathrm{g}}) = \sum_{h \in \mc H} q_h^{\mathrm{g}} \cdot \sigma_h^{\mathrm{g}} \cdot (d_{i,h}^{\mathrm{gu}}+d_{i,h}^{\mathrm{g}}) + l_h^{\mathrm{g}}, 
\end{equation}
where $q_h^{\mathrm{g}}>0$ and $ l_h^{\mathrm{g}} \in \R$ are the cost parameters whereas $\sigma^{\mathrm{g}} = \col(\{\sigma_h^{\mathrm{g}}\}_{h \in \mc H})$ with 
%\begin{equation*}
$	\sigma_h^{\mathrm{g}} = \sum_{i \in \mc I} (d_{i,h}^{\mathrm{ gu}}+ d_{i,h}^{\mathrm{g}}),$  %\label{eq:agg_gas}
%\end{equation*}
which denotes the aggregated gas demand. 
In addition, the following constraints on the gas network are typically considered:
%	\paragraph*{Constraints in gas flow networks}
%The constraints that we consider are:
\begin{itemize}
	\item Weymouth gas-flow equation for two neighboring nodes:
	\begin{equation}
		\phi_{(i,j),h} = \operatorname{sgn}(\psi_{i,h}-\psi_{j,h}) c_{(i,j)}^{\mathrm f}\sqrt{|\psi_{i,h}-\psi_{j,h}|},  \label{eq:gf_eq}
	\end{equation}
	for all $h \in \mc H$, $ j \in \mc N_i^{\mathrm{g}}$, and $i \in \mc I^{\mathrm{g}}$, where $\psi_{i,h} \in \bb R_{\geq 0}$ is the squared pressure at node $i$, and $c_{(i,j)}^{\mathrm f}>0$ is some constant. We define $\psi_i = \col((\psi_{i,h})_{h\in\mc H})$. {By assuming a sufficiently large sampling time, we consider static gas flow equations as in, e.g. \cite{he17,wen17, he18, liu19}, instead of dynamic ones such as \cite{roald20,hari22}.}
	
	%We note that $\phi_{(i,j)} $ is assigned to node $i$ whereas $\phi_{(j,i)} $ is assigned to node $j$. 
	\item Bounds on  the gas flow $\phi_{(i,j)}$ and the pressure $\psi_i$:
	\begin{align}
		%\underline s_i^{\mathrm{g}} & \leq s_i^{\mathrm{g}} \leq \overline s_i^{\mathrm{g}} ,\quad  \forall i \in \mc N^{\mathrm{g}} \label{eq:sply_lim}\\
		\hspace{-15pt} - \1_H \, \overline \phi_{(i,j)}  \leq \phi_{(i,j)} \leq \1_H \,\overline \phi_{(i,j)}, \ \ &\forall j\in\mc N_i^{\mathrm{g}}, i \in \mc I, \label{eq:flow_lim}\\
		\1_H \,\underline \psi_i  \leq \psi_i \leq \1_H \,\overline \psi_i,\ \  &\forall i \in \mc I, \label{eq:pres_lim} \\
		g_i^{\mathrm{s}}= 0, \ \ &\forall i \in {\mc I \backslash \mc I^{\mathrm{gs}},} \label{eq:gt}
	\end{align}
	where $ \overline \phi_{(i,j)}$ denotes the maximum flow of the link $(i,j) \in \mc E^{\mathrm g}$ whereas  {$\underline \psi_i$ and  $\overline \psi_i$} denote the minimum and maximum {(squared)} gas pressure of node $i$,  {respectively.} The constraint  {in} \eqref{eq:gt} ensures that gas only flows through the {nodes} that are connected to  {a gas source.} 
	\item Bounds on the total gas consumption of the network $\mc G^{\mathrm g}$:
	\begin{equation}
		\1_H\underline{\sigma}^{\mathrm{g}} \leq \sum_{i\in\mathcal{N}^{\mathrm g}} (d_i^{\mathrm{g}}  + d_i^{\mathrm{dg}}) \leq  \1_H\overline\sigma^{\mathrm{g}},
		\label{eq:dtg_bound}
	\end{equation}
	where  {$\underline{\sigma}^{\mathrm{g}}$ and $\overline \sigma^{\mathrm{g}}$} denote the minimum and maximum total gas consumption of the distribution network $\mc G^{\mathrm g}$,  {respectively.}
\end{itemize}

\subsection{Approximation models of gas-flow equations}
\label{sec:gf_approx}
The gas-flow equations in \eqref{eq:gf_eq} are nonlinear and in fact introduce non-convexity to the decision problem. In this work, {we consider two models that are commonly used in the literature, namely the  MISOC relaxation and the PWA approximation. In the former, the gas flow equation is reformulated and relaxed into inequality constraints,  whereas in the latter it is approximated by a PWA function. Both models require the introduction of auxiliary continuous and binary variables, collected in the vectors $y_i$ and $z_i$, for each $i\in \mc I$, respectively. For ease of presentation, we represent the two models as a set of equality and inequality constraints:} 
\begin{align}
	h_{i}^{\mathrm{cpl}}(y_i,\{y_j\}_{j \in \mc N_i^{\mathrm g}}) = 0, \ \ &\forall i \in \mc I,\label{eq:hf_cons2}\\
	h_{i}^{\mathrm{loc}}(y_i,z_i) = 0,\ \ &\forall i \in \mc I,\label{eq:hf_cons1}\\
	g_{i}^{\mathrm{cpl}}(y_{i},z_{i}, \{y_j\}_{j \in \mc N_i^{\mathrm g}}) \leq 0,\ \ &\forall i \in \mc I, \label{eq:gf_cons2}\\
	g_{i}^{\mathrm{loc}}(y_{i},z_{i}) \leq 0,\ \ &\forall i \in \mc I, \label{eq:gf_cons1}
\end{align}
where  \eqref{eq:hf_cons2} and \eqref{eq:gf_cons2} are coupling constraints since $h_{i}^{\mathrm{cpl}}$ and $g_{i}^{\mathrm{cpl}} $ depend on the decision variables of the neighbors in $\mc N_i^{\mathrm g}$ while \eqref{eq:hf_cons1} and \eqref{eq:gf_cons1} are local constraints. 

We now briefly explain the MISOC and PWA models and introduce their auxiliary variables.
\paragraph{MISOC model}
We can obtain the MISOC model by relaxing the gas-flow constraints in \eqref{eq:gf_eq} into inequality constraints, introducing a binary variable to indicate each flow direction, and using {the} McCormick envelope to substitute the product of two decision variables with an auxiliary variable, denoted by $\nu_{(i,j)} \in \bb R^H$, for each $j \in \mc N_i^{\mathrm g}$ and $i \in \mc I$. We provide the detailed derivation in Appendix \ref{ap:soc_gf}. For this model,  $y_{i}:= \col(\psi_i, g_i^{\mathrm{s}}, \{\phi_{(i,j)}, \nu_{(i,j)} \}_{j \in \mc N_i^{\mathrm{g}}} )  \in \bb R^{n_{y_i}}$, with $n_{y_i}=H(2+2|\mc N_i^{\mathrm{g}}|)$, concatenates the physical variables of the gas network, i.e., $\psi_i$, $g_i^{\mathrm s}$, and $\phi_{(i,j)}$, for all $j \in \mc N_i^{\mathrm g}$, with the auxiliary variables $\nu_{(i,j)}$,  whereas  $z_i := \col(\{\delta_{(i,j)} \}_{j \in \mc N_i^{\mathrm{g}}} ) \in \{0,1\}^{n_{z_i}}$, with $n_{z_i}=H|\mc N_i^{\mathrm{g}}|$,  collects the binary decision vectors that indicate the flow directions. The coupling constraints for this model are affine. Furthermore, this model includes a set of convex second order cone (SOC) local constraints and does not have any local equality constraints. 
We note that if the gas flow directions (binary variables) are known, then the model {becomes} convex. Furthermore, when the SOC local constraints of the MISOC model are tight, the original gas flow constraints in \eqref{eq:gf_eq} are satisfied. However, we cannot guarantee the tightness of the SOC constraints in general, although one can use a penalty-based method \cite{he18} or sequential cone programming method \cite{wang18,he18} to induce tightness. %On the other hand, unlike the MISOC model, 

\paragraph{PWA model}
We obtain the PWA model by approximating the mapping $\phi_{(i,j),h} \mapsto (1/{c_{(i,j)}^{\mathrm f}})^2{\phi_{(i,j),h}^2}$, for each $(i,j) \in \mc E^{\mathrm g}$, with $r$ pieces of affine functions. Then we can use this approximation in \eqref{eq:gf_eq}. Furthermore, by utilizing the mixed-logical constraint reformulation \cite{bemporad99}, we obtain an approximated model of the gas-flow constraints as a set of mixed-integer linear constraints, as detailed in Appendix \ref{ap:pwa_gf}. %that involve the main decision variable $x_i$, some auxiliary variables $y_i$ and $z_i$, as well as the neighbor decision variable $x_j$, for all $j \in \mc N_i^{\mathrm g}$, i.e., for each $i \in \mc I$,
%for each $i \in \mc I$,
 In this model, for each $i \in \mc I$, we introduce some auxiliary continuous variables {$\nu_{(i,j)}^{\psi} \in \bb R^H$, for all $j \in \mc N_i^{\mathrm g}$, and  $\nu_{(i,j)}^m \in \bb R^H$, for $m=1,\dots,r$ and all $j \in \mc N_i^{\mathrm g}$, and thus, define $y_{i}:= \col(\psi_i, g_i^{\mathrm{s}}, \{\phi_{(i,j)}, \nu_{(i,j)}^{\psi}, \{\nu_{(i,j)}^m \}_{m=1}^r \}_{j \in \mc N_i^{\mathrm{g}}} )  \in \bb R^{n_{y_i}}$, with $n_{y_i}=H(2+(2+r)|\mc N_i^{\mathrm{g}}|)$.} Moreover, the auxiliary variable $z_i := \col(\{\delta_{(i,j)}, \{\alpha_{(i,j)}^m, \beta_{(i,j)}^m, \gamma_{(i,j)}^m\}_{r=1}^m \}_{j \in \mc N_i^{\mathrm{g}}} ) \in \{0,1\}^{n_{z_i}}$ collects the binary decision vectors, with $n_{z_i}=H(1+3r)|\mc N_i^{\mathrm{g}}|$. The variable $\delta_{(i,j)}$ is the indicator of gas flow direction in $(i,j) \in \mc E^{\mathrm g}$ while the remaining variables define the active region of the PWA approximation function. We note that, in this model, all the constraints are affine, unlike {in} the MISOC model. On the other hand, the latter requires significantly less number of auxiliary variables than the PWA model.  In addition, the approximation accuracy of the PWA model can be controlled a priori by the model parameter $r$ (see \cite[Section IV]{ananduta22} for a numerical study).

\color{black}

\subsection{Generalized potential game {formulation}}

%Based on the model of the IEGDS explained in Section \ref{sec:IEGDSmodel}, 
We can now formulate the economic dispatch problem of an IEGDS as a generalized game. {The formulation is applicable for both gas flow models explained in Section \ref{sec:gf_approx}. To that end,} let us denote the decision variable of agent $i$ by $u_i :=(x_i,y_i,z_i)$ and {the collection of decision variables of all agents by $\bm u=(\bm x,\bm y, \bm z)$, where $\bm x = \col((x_i)_{i \in \mc I})$ ($\bm y$ and $\bm z$ are defined) similarly.} We can formulate the interdependent optimization problems of the economic dispatch as follows: 
\begin{subequations}
	\begin{empheq}[left=\forall i \in \mc I \empheqlbrace]{align}
		\label{eq:cost_gen}	
		& 
		\underset{u_i :=({x}_i, y_i, z_i)}{\min}  \ \ \  J_{i}\left(
		\bm x
		\right) \\[.2em]
		\label{eq:const_gen1}
		&\quad \text{s.t. } \; \quad u_i \in  \mc L_i, z_i \in \{0,1\}^{n_{z_i}},\\
		&\qquad \quad \; \; \text{\eqref{eq:p_mg_bound}, \eqref{eq:pf_all}, \eqref{eq:dtg_bound},  \eqref{eq:hf_cons2}, and \eqref{eq:gf_cons2}.} \label{eq:coup_const}% \\
		%
		%	& \qquad \qquad
		%	u_{i} \in \mc C_i(u_{-i})	
	\end{empheq}
	\label{eq:ED_game1}%
\end{subequations} 
The cost function of agent $i$ in \eqref{eq:cost_gen}, $J_i$, is composed by the local function $ f_i^{\mathrm{loc}}$ and  {the} coupling function $f_i^{\mathrm{cpl}}$, i.e.:
\begin{align}
	J_i(\bm x) &:= f_i^{\mathrm{loc}}(x_i) + f_i^{\mathrm{cpl}}(\bm x), \label{eq:Ji} \\
	f_i^{\mathrm{loc}}(x_i) &:= f_{i}^{\mathrm{ngu}}(p_i^{\mathrm{dg}}) + f_{i}^{\mathrm{st}}(p_{i}^{\mathrm{ch}},p_{i}^{\mathrm{dh}}) 
	, %+ \red{f_{i}^{\mathrm{tr}} ( \{ p_{(i,j)}^{\mathrm{tr}} \}_{j \in \mc N_i} ) }, 
	\label{eq:f_loc} \\
	f_i^{\mathrm{cpl}}(\bm x) &:=	f_{i}^{\mathrm{e}}( p_{i}^{\mathrm{eg}},\sigma^{\mathrm{e}}(\bm x) ) + f_i^{\mathrm g}(d_i^{\mathrm{gu}},\sigma^{\mathrm{g}}(\bm x)),  \label{eq:f_cpl}
\end{align}	 
where $\sigma^{\mathrm{e}}(\bm x)$ and $\sigma^{\mathrm{g}}(\bm x)$ depend on the decision variables of all agents. %as shown in \eqref{eq:Aggr} and \eqref{eq:agg_gas}. 
The local set $\mc L_i \subset \bb R^{n_i}$ in \eqref{eq:const_gen1}, with $n_i =n_{x_i} + n_{y_i} + n_{z_i} $, is defined by 
\begin{multline}
	\mc L_i := \left\{ u_i \in \bb R^{n_i} \mid \text{\eqref{eq:p_pu_bound}, \eqref{eq:conv_dgu_ppu}, \eqref{eq:storage}, \eqref{eq:rel_cons}, \eqref{eq:pow_bals}, \eqref{eq:gbal},}\right.\\
		\left.\text{ \eqref{eq:flow_lim}, \eqref{eq:pres_lim}, \eqref{eq:gt}, \eqref{eq:hf_cons1}, and \eqref{eq:gf_cons1} hold} \right\}, \label{eq:Li}
\end{multline}
whereas the equalities and inequalities stated in \eqref{eq:coup_const} define the coupling constraints of the game. {By the definitions of the constraints, including any of the gas flow approximation models, $\mc L_i$ is convex.} %We define the coupling set by 
%\begin{equation*}
%	\mc C = \{\bm u \in \bb R^n \mid \text{\eqref{eq:p_mg_bound}, \red{\eqref{eq:rep_const},} \eqref{eq:dtg_bound},  \eqref{eq:hf_cons2}, and \eqref{eq:gf_cons2} hold} \}.
%\end{equation*} 
However, due to the binary variables, $z_i$, for all $i \in \mc I$, the game in \eqref{eq:ED_game1} is mixed-integer. Moreover, let us consider the following technical assumption.% \cite{bibid}.
\begin{assumption}
	\label{as:nonempty_globalset}
	The global feasible set 
	\begin{equation*}
		\bm{\mc U} := \left( \textstyle \prod_{i\in\mc I} \mc L_i \right) \cap \bm{\mc C}\cap \bb R^{n_{x}+n_{y}}\times \{0,1\}^{n_{z}},
	\end{equation*}
	where {$\mc L_i$ is defined in \eqref{eq:Li} and} $$\bm{\mc C} := \{\bm u \in \bb R^n \mid \text{\eqref{eq:p_mg_bound}, \eqref{eq:pf_all}, } %\\
	\text{\eqref{eq:dtg_bound},  \eqref{eq:hf_cons2}, and \eqref{eq:gf_cons2}} \},$$ 
	is nonempty. \eod 
\end{assumption}

The game in \eqref{eq:ED_game1} is a generalized potential game \cite[Def. 2.1]{facchinei11decomposition}. To see this, let us denote by $\Xi_i^{\mathrm{e}}, \Xi_i^{\mathrm{g}} \in \bb R^{H \times Hn_{x_i}}$, for each $i \in \mc I$, the matrices that select $p_i^{\mathrm{eg}}$ and $d_i^{\mathrm{gu}}$ from $x_i$, i.e., $p_i^{\mathrm{eg}} = \Xi_i^{\mathrm{e}} x_i $ and $d_i^{\mathrm{gu}}=\Xi_i^{\mathrm{g}}x_i$, {and define} $Q^{\mathrm{e}} = \diag((q_h^{\mathrm{ e}})_{h\in \mc H})$ and $Q^{\mathrm{g}} = \diag((q_h^{\mathrm{ g}})_{h\in \mc H})$. {Furthermore, we let} $D_i = (\Xi_i^{\mathrm e})^\top Q^{\mathrm e}  \Xi_i^{\mathrm e} + (\Xi_i^{\mathrm g})^\top Q^{\mathrm g}  \Xi_i^{\mathrm g}$. %Then, the following statement holds.
\begin{lemma}
	\label{le:potential_game}
	Let Assumption \ref{as:nonempty_globalset} hold.  Then the game in \eqref{eq:ED_game1} is  a generalized potential game {\cite[Def. 2.1]{facchinei11decomposition}} with an exact potential function 
	\begin{equation}
		\label{eq:P}
		P(\bm x) = \tfrac{1}{2} \sum_{i\in\mathcal{I}}(J_i(\bm x) + f_i^{\mathrm{loc}}(x_i) +x_i^\top D_i x_i) ).
	\end{equation}
 % \eod
\end{lemma}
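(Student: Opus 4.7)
The plan is to verify the Monderer--Shapley condition $J_i(x_i', \bm x_{-i}) - J_i(x_i, \bm x_{-i}) = P(x_i', \bm x_{-i}) - P(x_i, \bm x_{-i})$ for every player $i \in \mc I$ and every admissible variation of $x_i$ with $\bm x_{-i}$ fixed, which is the defining property of an exact (generalized) potential game in the sense of \cite[Def. 2.1]{facchinei11decomposition}. Since $J_i$ and $P$ are smooth quadratics in the continuous block $\bm x$, and the integer--auxiliary blocks $(y_i,z_i)$ enter neither $J_i$ nor $P$, I would reduce the verification to the pointwise gradient identity $\nabla_{x_i} P(\bm x) = \nabla_{x_i} J_i(\bm x)$. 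Assumption~\ref{as:nonempty_globalset} serves only to guarantee that the feasible set over which this identity must hold is non-empty.

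First, I would compute $\nabla_{x_i} J_i$. Since $f_i^{\mathrm{loc}}$ depends only on $x_i$, the non-trivial part stems from $f_i^{\mathrm{cpl}}$, which couples player $i$ to the others exclusively through the aggregates $\sigma^{\mathrm e}$ and $\sigma^{\mathrm g}$. Using $p_i^{\mathrm{eg}} = \Xi_i^{\mathrm e} x_i$ and $d_i^{\mathrm{gu}} = \Xi_i^{\mathrm g} x_i$, differentiating the bilinear forms $(p_i^{\mathrm{eg}})^\top Q^{\mathrm e} \sigma^{\mathrm e}$ and $(d_i^{\mathrm{gu}}+d_i^{\mathrm g})^\top Q^{\mathrm g} \sigma^{\mathrm g}$ produces an aggregate contribution (proportional to $\sigma^{\mathrm e}$, resp.\ $\sigma^{\mathrm g}$) plus an own-action contribution (proportional to $p_i^{\mathrm{eg}}$, resp.\ $d_i^{\mathrm{gu}}$), on top of $\nabla_{x_i} f_i^{\mathrm{loc}}$.

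Next, I would simplify $P$ before differentiating it. The key algebraic identity is that summing an aggregative bilinear coupling over all agents collapses to an aggregate quadratic: $\sum_i (p_i^{\mathrm{eg}})^\top Q^{\mathrm e} \sigma^{\mathrm e} = (\sigma^{\mathrm e})^\top Q^{\mathrm e} \sigma^{\mathrm e}$, and analogously on the gas side. Substituting these identities into \eqref{eq:P} yields the compact representation $P(\bm x) = \sum_i f_i^{\mathrm{loc}}(x_i) + \tfrac{1}{2}(\sigma^{\mathrm e})^\top Q^{\mathrm e} \sigma^{\mathrm e} + \tfrac{1}{2}(\sigma^{\mathrm g})^\top Q^{\mathrm g} \sigma^{\mathrm g} + \tfrac{1}{2}\sum_i x_i^\top D_i x_i$. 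Differentiating term by term, the prefactor $1/2$ on each aggregate quadratic is compensated by the factor $2$ produced by the chain rule on $(\sigma^{\mathrm e})^\top Q^{\mathrm e} \sigma^{\mathrm e}$, while $D_i x_i = (\Xi_i^{\mathrm e})^\top Q^{\mathrm e} p_i^{\mathrm{eg}} + (\Xi_i^{\mathrm g})^\top Q^{\mathrm g} d_i^{\mathrm{gu}}$ supplies exactly the own-action contribution needed to reconstruct $\nabla_{x_i} J_i$.

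I expect the main obstacle to be book-keeping rather than conceptual: one must track that the $1/2$ prefactor in \eqref{eq:P} is calibrated precisely so that the double-counting inherent in $\sum_i J_i$ (each coupling term is ``owned'' by $i$ but also depends on all others via $\sigma^{\mathrm e}, \sigma^{\mathrm g}$) is exactly offset by the auxiliary $f_i^{\mathrm{loc}}$ and $x_i^\top D_i x_i$ correction terms, and yields $\nabla_{x_i} J_i$ on the nose rather than a scaled version. Once the gradient identity is established, integrating along any straight-line variation of $x_i$ inside the convex set $\mc L_i$ delivers the Monderer--Shapley equality and hence the generalized potential game property.
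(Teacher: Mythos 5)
Your proposal is correct and follows essentially the same route as the paper's proof: both reduce the exact-potential claim to the gradient identity $\nabla_{x_i}P(\bm x)=\nabla_{x_i}J_i(\bm x)$ for every $i\in\mc I$, obtained by rewriting $P$ as $\sum_{i\in\mc I}f_i^{\mathrm{loc}}(x_i)$ plus a quadratic form in the consumption aggregates (the paper encodes your aggregate-quadratic decomposition as $\tfrac12\bm x^\top\bm D\bm x$ with a symmetric block matrix $\bm D$ whose blocks are built from $\Xi_i^{\mathrm e},\Xi_i^{\mathrm g},Q^{\mathrm e},Q^{\mathrm g}$). The only ingredient you leave out is the paper's short verification that the global feasible set $\bm{\mc U}$ is not only non-empty (Assumption \ref{as:nonempty_globalset}) but also closed, as required by \cite[Def. 2.1]{facchinei11decomposition}; this is immediate since $\bm{\mc U}$ is an intersection of closed sets, but it should be stated.
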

%\begin{proof}
%See Appendix \ref{app:pf:le:potential_game}.
%\end{proof}

We observe  {that $P$ in \eqref{eq:P} is convex and that} the pseudogradient  {of the game is monotone,} as formally stated next.
\begin{lemma}
	\label{le:mon_pseudograd_P_conv}
	{Let $J_i$ be defined as in \eqref{eq:Ji}.} The following statements hold:
	\begin{enumerate}[(i)]
		\item \label{le:mon_pseudogradient}
		 The mapping $	\col\left(\left(	\nabla_{x_i} J_i(\bm x) \right)_{i \in \mc I}
		\right)$ and thus, the pseudogradient mapping of the game in \eqref{eq:ED_game1},
		\begin{equation}
			F(\bm u) := 
%			% [inline block 0: 3 envs, 61710 chars -> data_tex | \begin{bmatrix} 			{	\col\left(\left(	\nabla_{x_i} J_i(\bm x)\right)_{i \in \mc I}, \0 \right),}...]

\right) = N-1$.

\subsection{Proof of Theorem \ref{thm:character_sol}}
\label{app:pf:thm:character_sol}
%\com{complete the proof}
%\begin{enumerate} 
%\item Notation: jointly convex feasible set.
%\item %\com{Show that for $\ell =1$, the solution is a lower bound of the minimization of $P(\bm x)$ over the jointly convex feasible set, which is a relaxed. }

% Clarify: v-GNE = solution to the minimization of $P(\bm x)$. 
By Lemma \ref{le:mon_pseudograd_P_conv}.(\ref{le:P_convex}), $P(\bm x)$ in \eqref{eq:P} is a convex function. {Therefore,} a variational GNE of the convexified game in \eqref{eq:ED_game_c_pen} with $\rho = \rho^{(1)}=0$ (or equivalently the game in \eqref{eq:ED_game_c}), denoted by $(\hat{\bm x}^{(\ell)}, \hat{\bm y}^{(\ell)}, \hat{\bm z}^{(\ell)})$, is a solution to Problem {\eqref{eq:opt_P_cvx}}, 
%\begin{equation}
%	\begin{aligned}
%		\begin{cases}
%			\min_{\bm u} \quad & P(\bm x) \\
%			\operatorname{s.t.} \quad & \bm u \in \prod_{i\in\mc I} \mc L_i \cap \bm{\mc C},
%		\end{cases}
%	\end{aligned}
%	\label{eq:P_convex}
%\end{equation}
%since the Karush-Kuhn-Tucker optimality conditions of \eqref{eq:opt_P_cvx} and those of a v-GNE of the game in \eqref{eq:ED_game_c} coincide. 
{which} is a convex relaxation of {Problem \eqref{eq:opt_P}.}
%\begin{equation}
%	\begin{aligned}
%		\begin{cases}
%			\min_{\bm u} \quad & P(\bm x) \\
%			\operatorname{s.t.} \quad & \bm u \in \bm{\mc U}.
%		\end{cases}
%	\end{aligned}
%	\label{eq:P_MI}
%\end{equation} 
Therefore, {by denoting with $\bm u^\ast =(\bm x^\ast, \bm y^\ast, \bm z^\ast)$ a solution to Problem \eqref{eq:opt_P}, which is {an exact} GNE of the game in \eqref{eq:ED_game1}, we have that} 
\begin{equation}
P(\tilde{\bm x}^{(1)}) =	P(\hat{\bm x}^{(1)}) \leq P(\bm x^\ast), \label{eq:ineq_main_theorem1}
\end{equation}
where {the equality holds since} $\tilde{\bm x}^{(\ell)}=\hat{\bm x}^{(\ell)}$ (see Step 4 of Algorithm \ref{alg:2stage_meth_iterate}). %Hence, we conclude that $\tilde{\bm u}^{(1)}$ provides a lower bound of Problem \eqref{eq:P_MI}.
%	\item \com{Show that for $\overline{\ell}$, the solution is an upper bound of the minimization of $P(\bm x)$.} 
Next, we observe {from} Proposition \ref{prop:0opt_is_MIGNE2}.(i) {that} $\tilde{\bm u}^{(\overline \ell)}$ is a feasible point but {is} not necessarily a solution to Problem \eqref{eq:opt_P_cvx} (nor a GNE) since the cost functions considered in Step 1 is $\tilde{J}_i$, for all $i \in \mc I$. {Thus,} it holds that 
\begin{equation}
	P(\bm x^\ast) \leq P(\tilde{\bm x}^{(\overline \ell)}). \label{eq:ineq_main_theorem2}
\end{equation}

%	\item \com{Use the fact that $P$ is an exact potential function and the definition of $\varepsilon$-GNE to obtain the bound in \eqref{eq:eps_bound}. }

Since $P$ is an exact potential function, it holds that, for each $i \in \mc I$ and any $(x_i,y_i,z_i)\in \mc L_i \cap \mc C_i(\tilde{\bm u}_{-i}^{(\overline \ell)})\cap \bb R^{n_{x_i}+n_{y_i}}\times \{0,1\}^{n_{z_i}}$, we have that
\begin{align*}
	J_i(\tilde{\bm x}^{(\overline \ell)}) - J_i(x_i,\tilde{\bm x}_{-i}^{(\overline \ell)}) &= P(\tilde{\bm x}^{(\overline \ell)}) - P(x_i,\tilde{\bm x}_{-i}^{(\overline \ell)}) \\
	&\leq P(\tilde{\bm x}^{(\overline \ell)}) - P(\bm x^\ast) \\
	&\leq P(\tilde{\bm x}^{(\overline \ell)}) - P(\tilde{\bm x}^{(1)}) = \varepsilon,
\end{align*}
where the first inequality {holds} since $\bm u^\ast$ is an optimizer of Problem \eqref{eq:opt_P}, and the second inequality is obtained by combining \eqref{eq:ineq_main_theorem1} and \eqref{eq:ineq_main_theorem2}.
%	\end{enumerate}

\iffalse 

\subsection{Proof of Proposition \ref{prop:0opt_is_MIGNE}}
%\begin{proof}
	The proofs are analogous to those of Proposition \ref{prop:0opt_is_MIGNE2}(i)--(ii), where ${\bm{u}}^\star$ satisfies all except possibly the gas flow equality constraints in \eqref{eq:gf_eq6}. By the definition of $\tilde J_\psi$ in \eqref{eq:qp_pressure_cost}, $(\bm y^\star,\bm z^\star)$ satisfies \eqref{eq:gf_eq6} if and only if $\tilde J_\psi(\bm \psi^\star)=0$.
%\end{proof}
%\newpage
\color{black}
\fi 

\subsection{Proof of Proposition \ref{prop:sn_0sol}}
\label{app:pf:prop:sn_0sol}
%	For clarity, let us breakdown \eqref{eq:lin_eq_psi} into $h$ systems of linear equations, i.e.,
%	\begin{equation}
	%		 E(\tilde{\bm{\delta}}_h) \bm \psi_h -\bm \theta_h(\hat{\bm{\phi}_h}, %\tilde{\bm{\delta}}^{\mathrm{pwa}})=0, 
	%		\label{eq:lin_eq_psi_h}
	%	\end{equation}
%for all $h \in \mc H$. Therefore, $\bm \psi^0$ is composed of the subvectors $\bm \psi_h^0$, for all $h \in \mc H$, each of which is a particular solution to \eqref{eq:lin_eq_psi_h}. 
By the definition of the matrix $ E(\tilde{\bm{\delta}}_h)$ and Assumption \ref{as:tree_G}, its null space is $\{ \1_N \}$. Hence, for each $h \in \mc H$, the solutions to \eqref{eq:lin_eq_psi_h2} can be described by $\bm \psi_h^0 + \xi \1_N,$ for any $\xi \in \bb R$.  {Therefore, Problem \eqref{eq:qp_pressure} has at least a solution if and only if there exists $\xi \in \bb R$ such that }
%$$
\begin{equation}
	\col(\{\underline{\psi}_i\}_{i \in \mc I}) \leq \bm \psi_h^0 + \xi \1_N \leq \col(\{\overline{\psi}_i\}_{i \in \mc I}),
	\label{eq:ineq_particular_sol}
\end{equation}
%$$
since for any $\xi \in \bb R$, $J_\psi(\bm \psi_h^0 + \xi \1_N)=0$. For each $h \in \mc H$, let us now consider another particular solution, 
\begin{equation}
	\bar{\bm \psi}_h^0 = \bm \psi_h^0 - \1_N \min_{i \in \mc I} [\bm \psi_h^0]_i,
	\label{eq:bar_psi_0}
\end{equation}
i.e., $\xi =  -\min_{i \in \mc I} [\bm \psi_h^0]_i$. Hence, $\min_{i\in \mc I} [\bar{\bm \psi}_h^0]_i = 0$ and $\max_{i\in \mc I} [\bar{\bm \psi}_h^0]_i = \max_{i\in \mc I} [{\bm \psi}_h^0]_i-\min_{i \in \mc I} [\bm \psi_h^0]_i \geq 0$. Now, let us consider the indices $j$ and $k$ as defined in Proposition \ref{prop:sn_0sol}, i.e., $j \in \argmin_{\ell \in \operatorname{argmax}_{i \in \mc I} [\bm \psi_h^0]_i} \overline{\psi}_\ell$, and $k \in \operatorname{argmax}_{\ell \in \operatorname{argmin}_{i \in \mc I} [\bm \psi_h^0]_i}\overline{\psi}_\ell$ and let us substitute $\bm \psi_h^0$ with $\bar{\bm \psi}_h^0$ in \eqref{eq:ineq_particular_sol}.  Since $\underline{\psi}_i \geq 0$, for any $i \in \mc I$, and $\min_{i\in \mc I} [\bar{\bm \psi}_h^0]_i = 0$, the first inequality in \eqref{eq:ineq_particular_sol} is satisfied if and only if $\xi \geq \underline{\psi}_k \geq 0$. Furthermore, the second inequality is satisfied if and only if $\xi \leq \overline{\psi}_j - [\bar{\bm \psi}_h^0]_j$. Hence, there exists $\xi \geq 0$ if and only if
\begin{align}
	\underline{\psi}_k \leq \overline{\psi}_j - [\bar{\bm \psi}_h^0]_j &\Leftrightarrow  [\bar{\bm \psi}_h^0]_j  \leq \overline{\psi}_j -\underline{\psi}_k \notag \\
	&\Leftrightarrow [\bm \psi_h^0]_j -  [\bm \psi_h^0]_k \leq \overline{\psi}_{j} - \underline{\psi}_{k}, \notag 
\end{align}
where the last implication follows the definition of $\bar{\bm \psi}_h^0$ in \eqref{eq:bar_psi_0}.

\bibliographystyle{IEEEtran}
%\bibliography{IEEEfull,biblio}
\bibliography{ref}

% Generated by IEEEtran.bst, version: 1.14 (2015/08/26)
\begin{thebibliography}{10}
\providecommand{\url}[1]{#1}
\csname url@samestyle\endcsname
\providecommand{\newblock}{\relax}
\providecommand{\bibinfo}[2]{#2}
\providecommand{\BIBentrySTDinterwordspacing}{\spaceskip=0pt\relax}
\providecommand{\BIBentryALTinterwordstretchfactor}{4}
\providecommand{\BIBentryALTinterwordspacing}{\spaceskip=\fontdimen2\font plus
\BIBentryALTinterwordstretchfactor\fontdimen3\font minus
  \fontdimen4\font\relax}
\providecommand{\BIBforeignlanguage}[2]{{%
\expandafter\ifx\csname l@#1\endcsname\relax
\typeout{** WARNING: IEEEtran.bst: No hyphenation pattern has been}%
\typeout{** loaded for the language `#1'. Using the pattern for}%
\typeout{** the default language instead.}%
\else
\language=\csname l@#1\endcsname
\fi
#2}}
\providecommand{\BIBdecl}{\relax}
\BIBdecl

\bibitem{pepermans05distributed}
G.~Pepermans, J.~Driesen, D.~Haeseldonckx, R.~Belmans, and W.~D’haeseleer,
  ``Distributed generation: {Definition,} benefits and issues,'' \emph{Energy
  policy}, vol.~33, no.~6, pp. 787--798, 2005.

\bibitem{mehigan18review}
L.~Mehigan, J.~P. Deane, B.~{\'O}. Gallach{\'o}ir, and V.~Bertsch, ``A review
  of the role of distributed generation {(DG)} in future electricity systems,''
  \emph{Energy}, vol. 163, pp. 822--836, 2018.

\bibitem{houwing10}
M.~Houwing, R.~R. Negenborn, and B.~De~Schutter, ``Demand response with
  {micro-CHP} systems,'' \emph{Proc. of the IEEE}, vol.~99, no.~1, pp.
  200--213, 2010.

\bibitem{zhang20}
G.~Zhang, Z.~Shen, and L.~Wang, ``Online energy management for microgrids with
  {CHP} co-generation and energy storage,'' \emph{IEEE Trans. Control Systems
  Technology}, vol.~28, no.~2, pp. 533--541, 2020.

\bibitem{zlotnik16}
A.~Zlotnik, L.~Roald, S.~Backhaus, M.~Chertkov, and G.~Andersson, ``Control
  policies for operational coordination of electric power and natural gas
  transmission systems,'' in \emph{Proc. of American Control Conference}, 2016,
  pp. 7478--7483.

\bibitem{singh20}
M.~K. Singh and V.~Kekatos, ``Natural gas flow solvers using convex
  relaxation,'' \emph{IEEE Trans. Control of Network Systems}, vol.~7, no.~3,
  pp. 1283--1295, 2020.

\bibitem{sundar21}
K.~Sundar, S.~Misra, A.~Zlotnik, and R.~Bent, ``Robust gas pipeline network
  expansion planning to support power system reliability,'' in \emph{Proc.
  American Control Conference}, 2021, pp. 620--627.

\bibitem{hari22}
S.~K.~K. Hari, K.~Sundar, S.~Srinivasan, A.~Zlotnik, and R.~Bent, ``Operation
  of natural gas pipeline networks with storage under transient flow
  conditions,'' \emph{IEEE Trans. Control Systems Technology}, vol.~30, no.~2,
  pp. 667--679, 2022.

\bibitem{roald20}
L.~A. Roald, K.~Sundar, A.~Zlotnik, S.~Misra, and G.~Andersson, ``An
  uncertainty management framework for integrated gas-electric energy
  systems,'' \emph{Proc. of the IEEE}, vol. 108, no.~9, pp. 1518--1540, 2020.

\bibitem{urbina07}
M.~Urbina and Z.~Li, ``A combined model for analyzing the interdependency of
  electrical and gas systems,'' in \emph{2007 39th North American power
  symposium}.\hskip 1em plus 0.5em minus 0.4em\relax IEEE, 2007, pp. 468--472.

\bibitem{correa14b}
C.~M. Correa-Posada and P.~S{\'a}nchez-Mart{\i}n, ``Security-constrained
  optimal power and natural gas flow,'' \emph{IEEE Trans. Power Systems},
  vol.~29, no.~4, pp. 1780--1787, 2014.

\bibitem{zhang16}
X.~Zhang, M.~Shahidehpour, A.~Alabdulwahab, and A.~Abusorrah, ``Hourly
  electricity demand response in the stochastic day-ahead scheduling of
  coordinated electricity and natural gas networks,'' \emph{IEEE Trans. Power
  Systems}, vol.~31, no.~1, pp. 592--601, 2015.

\bibitem{wu20decentralized}
G.~Wu, Y.~Xiang, J.~Liu, J.~Gou, X.~Shen, Y.~Huang, and S.~Jawad,
  ``Decentralized day-ahead scheduling of multi-area integrated electricity and
  natural gas systems considering reserve optimization,'' \emph{Energy}, vol.
  198, p. 117271, 2020.

\bibitem{he17}
Y.~He, M.~Shahidehpour, Z.~Li, C.~Guo, and B.~Zhu, ``Robust constrained
  operation of integrated electricity-natural gas system considering
  distributed natural gas storage,'' \emph{IEEE Trans. Sustainable Energy},
  vol.~9, no.~3, pp. 1061--1071, 2017.

\bibitem{wen17}
Y.~Wen, X.~Qu, W.~Li, X.~Liu, and X.~Ye, ``Synergistic operation of electricity
  and natural gas networks via {ADMM},'' \emph{IEEE Trans. Smart Grid}, vol.~9,
  no.~5, pp. 4555--4565, 2017.

\bibitem{he18}
Y.~He, M.~Yan, M.~Shahidehpour, Z.~Li, C.~Guo, L.~Wu, and Y.~Ding,
  ``Decentralized optimization of multi-area electricity-natural gas flows
  based on cone reformulation,'' \emph{IEEE Trans. Power Systems}, vol.~33,
  no.~4, pp. 4531--4542, 2017.

\bibitem{liu19}
F.~Liu, Z.~Bie, and X.~Wang, ``Day-ahead dispatch of integrated electricity and
  natural gas system considering reserve scheduling and renewable
  uncertainties,'' \emph{IEEE Trans. Sustainable Energy}, vol.~10, no.~2, pp.
  646--658, 2018.

\bibitem{li18privacy}
Y.~Li, Z.~Li, F.~Wen, and M.~Shahidehpour, ``Privacy-preserving optimal
  dispatch for an integrated power distribution and natural gas system in
  networked energy hubs,'' \emph{IEEE Trans. Sustainable Energy}, vol.~10,
  no.~4, pp. 2028--2038, 2018.

\bibitem{qi19}
F.~Qi, M.~Shahidehpour, Z.~Li, F.~Wen, and C.~Shao, ``A chance-constrained
  decentralized operation of multi-area integrated electricity--natural gas
  systems with variable wind and solar energy,'' \emph{IEEE Trans. Sustainable
  Energy}, vol.~11, no.~4, pp. 2230--2240, 2019.

\bibitem{atzeni13}
I.~{Atzeni}, L.~G. {Ord\'{o}\~{n}ez}, G.~{Scutari}, D.~P. {Palomar}, and J.~R.
  {Fonollosa}, ``Demand-side management via distributed energy generation and
  storage optimization,'' \emph{IEEE Trans. Smart Grid}, vol.~4, no.~2, pp.
  866--876, 2013.

\bibitem{belgioioso21operationally}
G.~Belgioioso, W.~Ananduta, S.~Grammatico, and C.~Ocampo-Martinez,
  ``Operationally-safe peer-to-peer energy trading in distribution grids: A
  game-theoretic market-clearing mechanism,'' \emph{IEEE Trans. Smart Grid},
  vol.~13, no.~4, pp. 2897--2907, 2022.

\bibitem{facchinei10}
F.~Facchinei and C.~Kanzow, ``Generalized {Nash} equilibrium problems,''
  \emph{Annals of Operations Research}, vol. 175, no.~1, pp. 177--211, 2010.

\bibitem{yi19}
P.~Yi and L.~Pavel, ``An operator splitting approach for distributed
  generalized {Nash} equilibria computation,'' \emph{Automatica}, vol. 102, pp.
  111--121, 2019.

\bibitem{franci20}
B.~Franci, M.~Staudigl, and S.~Grammatico, ``Distributed forward-backward
  (half) forward algorithms for generalized {Nash} equilibrium seeking,'' in
  \emph{Proc. 2020 European Control Conference (ECC)}.\hskip 1em plus 0.5em
  minus 0.4em\relax IEEE, 2020, pp. 1274--1279.

\bibitem{belgioioso20semi}
G.~Belgioioso and S.~Grammatico, ``Semi-decentralized generalized {N}ash
  equilibrium seeking in monotone aggregative games,'' \emph{IEEE Trans.
  Automatic Control}, 2021.

\bibitem{bianchi22fast}
M.~Bianchi, G.~Belgioioso, and S.~Grammatico, ``Fast generalized nash
  equilibrium seeking under partial-decision information,'' \emph{Automatica},
  vol. 136, p. 110080, 2022.

\bibitem{sagratella17}
S.~Sagratella, ``Algorithms for generalized potential games with mixed-integer
  variables,'' \emph{Computational Optimization and Applications}, vol.~68,
  no.~3, pp. 689--717, 2017.

\bibitem{sagratella19}
------, ``On generalized {Nash} equilibrium problems with linear coupling
  constraints and mixed-integer variables,'' \emph{Optimization}, vol.~68,
  no.~1, pp. 197--226, 2019.

\bibitem{cenedese19}
C.~{Cenedese}, F.~{Fabiani}, M.~{Cucuzzella}, J.~M.~A. {Scherpen}, M.~{Cao},
  and S.~{Grammatico}, ``Charging plug-in electric vehicles as a mixed-integer
  aggregative game,'' in \emph{Proc. IEEE 58th Conference on Decision and
  Control (CDC)}, 2019, pp. 4904--4909.

\bibitem{fabiani20}
F.~{Fabiani} and S.~{Grammatico}, ``Multi-vehicle automated driving as a
  generalized mixed-integer potential game,'' \emph{IEEE Trans. Intelligent
  Transportation Systems}, vol.~21, no.~3, pp. 1064--1073, 2020.

\bibitem{wang18}
C.~Wang, W.~Wei, J.~Wang, L.~Wu, and Y.~Liang, ``Equilibrium of interdependent
  gas and electricity markets with marginal price based bilateral energy
  trading,'' \emph{IEEE Trans. Power Systems}, vol.~33, no.~5, pp. 4854--4867,
  2018.

\bibitem{chen20}
S.~Chen, A.~J. Conejo, R.~Sioshansi, and Z.~Wei, ``Operational equilibria of
  electric and natural gas systems with limited information interchange,''
  \emph{IEEE Trans. Power Systems}, vol.~35, no.~1, pp. 662--671, 2020.

\bibitem{falsone18}
A.~Falsone, K.~Margellos, and M.~Prandini, ``A distributed iterative algorithm
  for multi-agent milps: Finite-time feasibility and performance
  characterization,'' \emph{IEEE Control Systems Letters}, vol.~2, no.~4, pp.
  563--568, 2018.

\bibitem{camisa22}
A.~Camisa and G.~Notarstefano, ``A distributed mixed-integer framework to
  stochastic optimal microgrid control,'' \emph{IEEE Trans. Control Systems
  Technology}, pp. 1--13, 2022.

\bibitem{ananduta22}
W.~Ananduta and S.~Grammatico, ``Approximate solutions to the optimal flow
  problem of multi-area integrated electrical and gas systems,'' in \emph{Proc.
  the 61st Conference on Decision and Control}, 2022, to appear, available at
  https://arxiv.org/abs/2206.01098.

\bibitem{yang19}
Z.~{Yang}, K.~{Xie}, J.~{Yu}, H.~{Zhong}, N.~{Zhang}, and Q.~{Xia}, ``A general
  formulation of linear power flow models: Basic theory and error analysis,''
  \emph{IEEE Trans. Power Systems}, vol.~34, no.~2, pp. 1315--1324, 2019.

\bibitem{bemporad99}
A.~Bemporad and M.~Morari, ``Control of systems integrating logic, dynamics,
  and constraints,'' \emph{Automatica}, vol.~35, no.~3, pp. 407--427, 1999.

\bibitem{facchinei11decomposition}
F.~Facchinei, V.~Piccialli, and M.~Sciandrone, ``Decomposition algorithms for
  generalized potential games,'' \emph{Computational Optimization and
  Applications}, vol.~50, no.~2, pp. 237--262, 2011.

\bibitem{falsone17}
A.~Falsone, K.~Margellos, S.~Garatti, and M.~Prandini, ``Dual decomposition for
  multi-agent distributed optimization with coupling constraints,''
  \emph{Automatica}, vol.~84, pp. 149--158, 2017.

\bibitem{ananduta22b}
W.~Ananduta, A.~Nedić, and C.~Ocampo-Martinez, ``Distributed augmented
  lagrangian method for link-based resource sharing problems of multiagent
  systems,'' \emph{IEEE Trans. Automatic Control}, vol.~67, no.~6, pp.
  3067--3074, 2022.

\bibitem{bauschke11}
H.~H. Bauschke and P.~L. Combettes, \emph{Convex analysis and monotone operator
  theory in Hilbert spaces}.\hskip 1em plus 0.5em minus 0.4em\relax Springer,
  2011, vol. 408.

\end{thebibliography}

\end{document}